\definecolor{MyLinkColor}{rgb}{0,0,0.4}
\newcommand{\R}{{\mathbb R}}
\newcommand{\Z}{{\mathbb Z}}
\newcommand{\N}{{\mathbb N}}
\newcommand{\s}{\mathbb S}
\newcommand{\ov}{\overline}
\newcommand{\p}{\partial}
\newcommand{\0}{\Omega}
\newcommand{\e}{\varepsilon}
\newtheorem{thm}{Theorem}[section]
\newtheorem{prop}[thm]{Proposition}
\newtheorem{lemma}[thm]{Lemma}
\newtheorem{cor}[thm]{Corollary}
\theoremstyle{remark} 
\newtheorem{rem}[thm]{Remark}
\numberwithin{equation}{section}
\title[On the analyticity of periodic gravity water waves ]{On the analyticity of periodic gravity water waves with integrable vorticity function}
\subjclass[2010]{35Q31;  76B03; 76B15;  76B47}
\keywords{Vorticity function; Weak solutions; Gravity waves;  Real-analytic}
\author[J. Escher]{Joachim Escher}
\address{Institut f{\"u}r Angewandte Mathematik, Leibniz Universit{\"a}t Hannover, Welfengarten~1, 30167 Hannover, Germany.}
\email{escher@ifam.uni-hannover.de}
\author[B.--V. Matioc]{Bogdan--Vasile Matioc}
\address{Institut f{\"u}r  Mathematik,  Universit{\"a}t Wien, Nordbergstrasse 15, 1090 Wien, \"Osterreich. }
\email{bogdan-vasile.matioc@univie.ac.at}
\begin{document}

\begin{abstract}
We prove that the streamlines and the profile of periodic gravity water waves 
 traveling  over a flat bed with wavespeed which exceeds the horizontal velocity of all fluid particles are real-analytic graphs if the  vorticity function is merely integrable.
\end{abstract}

\maketitle

\section{Introduction and the main result}

This paper is dedicated to the study of the regularity properties of two-dimensional periodic gravity water waves traveling over a flat bed when gravity is the sole driving mechanism.
The waves we consider are rotational and the vorticity function has a general form. 
While there are no known non-trivial solutions that describe periodic gravity water waves traveling over a flat bed,  in the case when the fluid has infinite depth there is an explicit solution  which is due to Gerstner \cite{Co01, Ge09, He08, AM12a}.
This solution has a  non-trivial vorticity function, the streamlines being real-analytic   trochoids. 
It is interesting that Gerstner's example   is the only possible non-trivial solution for gravity water waves without stagnation points and with the pressure constant along all streamlines, cf. \cite{MM12}. 
Small amplitude gravity water waves possessing a general vorticity were constructed first in \cite{DJ34} by using power series expansions,   the existence of waves of large amplitude and without stagnation points
being established more recently \cite{CoSt04} by using  local and  global bifurcation techniques.
While   in \cite{CoSt04} the vorticity function  is assumed to have a H\"older continuous derivative, the same authors construct  in \cite{CS11} gravity water waves with a vorticity function which is merely bounded.
This new  result is obtained by taking advantage of the weak formulation for the water wave problem.  
In the irrotational setting, when the vorticity is zero and the waves travel  over still water or uniformly sheared  currents, the existence theory  is related to   Nekrasov's equation.
Based on this formulation of the problem,   irrotational water waves with small and large amplitude were constructed 
in \cite{AFT82, KN78,  To96} by using  global bifurcation theory, some of the waves possessing  sharp crests with a stagnation point at the top.

When proving the real-analyticity of gravity water waves, the assumption that the flow does not possess stagnation points, that is fluid particles that travel horizontally with the same speed as the wave, is crucial.
Indeed, as a consequence of elliptic maximum principles, irrotational waves cannot possess stagnation points beneath the wave profile (see e.g. \cite{CoSt10}), but the Stokes wave of extreme form has a stagnation point at each crest and the wave profile is 
only Lipschitz continuous in a neighbourhood of that point as it forms an angle of $2\pi/3$.
On the other hand, it is known that waves with vorticity may possess stagnation points inside the fluid layer, see \cite{EEW11, W09} for the case when the vorticity function is constant or linear, 
 and that the streamlines containing the stagnation points are not real-analytic. 
For  irrotational  waves, it is shown in \cite{L52} by using the  Schwartz reflection principle that in the absence of stagnation points the profile of the waves is real-analytic. 
In the context of rotational waves without stagnation points, it was proved first for waves with  H\"older continuous vorticity \cite{AC11} and later for waves with merely bounded vorticity \cite{BM11}
that all the streamlines beneath the wave surface are real-analytic. 
The authors of \cite{AC11} establish the real-analyticity of the wave profile only under the additional assumption that the vorticity function is real-analytic.
The methods  rely strongly on  invariance of the problem with respect to horizontal translations and they
have been generalized to prove similar results for capillary and capillary-gravity waves \cite{Hen10, DH11a, DH12}, deep-water waves \cite{Mat12},
solitary waves \cite{H11, MM12-a},  and stratified waves \cite{HM} (see also the  survey \cite{Esch-reg12}).
While in these references the real-analyticity of the wave profile  is established only for real-analytic vorticity functions, it is shown \cite{LW12x, BM13, W13}
that any of the waves mentioned before has a real-analytic profile if the vorticity function 
is merely H\"older continuous.  
In the later papers the authors introduce additionally an  iteration procedure and  estimate all partial  derivatives of an associated height function with respect to the horizontal variable in order to obtain the desired regularity result.
We enhance that capillary and capillary-gravity waves with a real-analytic and decreasing vorticity  function have a real-analytic profile, cf. \cite{Ma12a}, the  result being true for waves with or without stagnation points. 
The smoothness of the free surface for  gravity waves without stagnation points on the surface  and for    waves with capillary effects has been established in  \cite{CV11, CM12x-1, CM12x}  
 in the regime when the  vorticity  is constant but the profile is not necessarily a graph (waves with overhanging profiles). 
 
 We establish herein the real-analyticity of the free surface and of the streamlines for solutions of the weak formulation of the water wave problem derived in \cite{CS11}, when assuming only integrability of the vorticity  function.  
 Such solutions describe water waves traveling over currents which present sudden changes with respect to the depth in form of a  discontinuous vorticity  or waves generated by wind that possess a thin layer of high vorticity at the surface
 (see \cite{Ks08, PB74}). 
 We enhance that the regularity results obtained in \cite{AC11,  LW12x, BM11} appear as a particular case of our analysis.
 The relevance of our result can be viewed in the light of  \cite[Theorem  3.1  and Remark 3.2]{MM13} as we can now state: within the set of all periodic gravity waves without stagnation points
 the symmetric waves with one crest and trough per period 
are  characterized by the property that all the streamlines have a global minimum on the same vertical line.
Another characterization for  symmetric gravity water waves with one crest and trough per period was obtained in \cite{CoEhWa07,CoEs04_b} where it is shown that merely the fact that the wave profile has a unique crest per period ensures the symmetry of the wave.  
The proof  our main result Theorem \ref{MT} combines the invariance of the problem with respect to horizontal translations  with Schauder estimates for weak solutions of elliptic boundary value problems.
Particularly, we show that even under this weak integrability condition on the vorticity function, all  derivatives of the  height function with respect to the horizontal variable have H\"older continuous derivatives and are weak solutions of certain 
elliptic problems.
Estimating their H\"older norm, we obtain  the desired regularity result.

To  complete this section  we present the governing equations  and our main result Theorem \ref{MT}.
We consider herein the  water wave problem in the formulation for the height function $h$    
\begin{equation}\label{H}
\left\{
\begin{array}{rllll}
(1+h_q^2)h_{pp}-2h_ph_qh_{pq}+h_p^2h_{qq}-\gamma(p)h_p^3&=&0&\text{in}&\0,\\[1ex]
1+h_q^2+(2gh-Q)h_p^2&=&0&\text{on} &p=0,\\[1ex]
h&=&0&\text{on}&p=p_0,
\end{array}
\right.
\end{equation}
where $\0:=\s\times(p_0,0)$ and the function $h$ is assumed to satisfy additionally
\begin{equation}\label{CH}
\inf_\0 h_p >0.
\end{equation}
We denoted by $\gamma$  the vorticity function,   $g$ is  the gravity constant, $p_0<0$ is the relative mass flux, and $Q$ is the  total head.
We set $\s:=\R/(2\pi\Z)$ to denote the unit circle.
The equivalence of the height function formulation \eqref{H}-\eqref{CH} to the Euler equations is discussed in detail in \cite{Con11, CoSt04} in the context of smooth solutions and in \cite{CS11, Esch-reg12}
for $L_r$-solutions\footnote{Given $1\le r\le\infty$, we denote the usual Lebesgue spaces by $L_r$.}, see also the remarks subsequent to Theorem \ref{MT} below.  

Solutions of \eqref{H}-\eqref{CH} describe two-dimensional $2\pi-$periodic gravity water waves traveling over the flat bed $y=-d,$ the wave profile being the graph $y=h(q,0)-d.$
Hereby, $d$ is an arbitrary real constant.
The value $h(q,p)$ represents  the exact  height  of the water particle   determined by the coordinate $(q,p)$  above the horizontal bed.
Because of \eqref{CH}, these solutions correspond to waves without stagnation points and critical layers, each streamline being a graph $y=h(q,p)-d$, for some unique $p\in[p_0,0].$ 
Indeed, following the wave from a frame moving with the wavespeed $c$, which does not appear in \eqref{H} as a solution of \eqref{H} solves the water wave problem for any value of $c$, cf. \cite{CoSt04},
the velocity field $(u,v)$ of the fluid is given by
\[
(u-c,v)=\left(-\frac{1}{h_p}, -\frac{h_q}{h_p}\right),
\]
and it follows readily from \eqref{CH} that  the horizontal speed of each individual particle is less than the wave speed $c$.  
Moreover, since the streamlines are curves in the fluid which are tangent to the velocity field, 
it is easy to see  that   these are the graphs $y=h(q,p)-d$, $p\in[p_0,0],$ as the tangent to each graph is always parallel to the velocity field and, by letting $p$ vary between $p_0$ and $0$, the graphs foliate the fluid domain. 

Assuming only boundedness and a sign condition on  the vorticity function $\gamma,$ the authors establish in \cite{CS11} the existence of weak solutions of \eqref{H}-\eqref{CH} which form a $C^1$-bifurcation curve.
The local branch can be continued if one assumes that the vorticity is H\"older  continuous close to the free surface and the bed, a characterization of the global branch  being also included.
These weak solutions belong to the space $C^{1+\alpha}(\ov\0)$ for some $\alpha\in(0,1)$ and are solutions of \eqref{H}-\eqref{CH} in the sense that they satisfy  the last two  equations of \eqref{H}  and the condition \eqref{CH} pointwise,
while the first equation is satisfied in a weak sense. 
More precisely, introducing the anti-derivative
\[\Gamma(p):=\int_0^p\gamma(s)\, ds \qquad\text{for $p\in(p_0,0)$,}\]
the first equation of \eqref{H} may be written in the weak form  
\begin{equation}\label{WH}
\left(\frac{h_q}{h_p}\right)_q-\left(\Gamma(p)+\frac{1+h_q^2}{2h_p^2}\right)_p=0\qquad\text{in $\0,$}
\end{equation}
and the weak solutions found in \cite{CS11} satisfy \eqref{WH} when testing with functions from $C^1(\ov\0) $ that have compact support.
In fact, having a bounded vorticity function, the authors of  \cite{CS11}  prove that their weak solutions belong to $W^2_r(\0)$, with $r:=2/(1-\alpha)$.
We come now the the main result of the paper.

\begin{thm}\label{MT}
 Assume that $\gamma\in L_1((p_0,0))$. 
Given a weak solution $h\in C^{1+\alpha}(\ov\0)$ of \eqref{H}, we have that $\p_q^m h\in C^{1+\alpha}(\ov\0)$ for all $m\in\N.$ 
Moreover, there exists a constant $L>1$
 with the property that
 \begin{equation}\label{E}
  \|\p_q^m h\|_{{1+\alpha}}\leq L^{m-2}(m-3)!
 \end{equation}
for all integers $m\geq3.$
\end{thm}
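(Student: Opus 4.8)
The plan is to exploit the invariance of the height-function problem \eqref{H}--\eqref{WH} under the horizontal translations $q\mapsto q+\tau$: since $\gamma,\Gamma,g,Q$ and $p_0$ depend on $p$ only, each translate $h(\,\cdot+\tau,\,\cdot\,)$ is again a weak solution, and --- this is the key point --- a single $q$-differentiation of \eqref{WH} annihilates the term $\Gamma(p)$, the only place where the merely integrable vorticity $\gamma$ enters. Setting $v_m:=\p_q^m h$, I expect that each $v_m$ with $m\ge1$ solves a linear elliptic boundary value problem of the form
\[
\left\{
\begin{array}{rllll}
\mathrm{div}(A\nabla v_m)+b_m\cdot\nabla v_m+c_m v_m&=&g_m&\text{in}&\0,\\[1ex]
h_q\,\p_q v_m+(2gh-Q)h_p\,\p_p v_m+g h_p^2\,v_m&=&\psi_m&\text{on}&p=0,\\[1ex]
v_m&=&0&\text{on}&p=p_0,
\end{array}
\right.
\]
where $A$ is the symmetric matrix with entries $A_{11}=h_p^{-1}$, $A_{12}=A_{21}=-h_q h_p^{-2}$, $A_{22}=(1+h_q^2)h_p^{-3}$. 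This $A$ is \emph{independent of $m$}, has $C^\alpha$ entries, and is uniformly elliptic because $\det A=h_p^{-4}$ and \eqref{CH} holds. On $p=0$ the coefficient $(2gh-Q)h_p$ of the normal derivative is nonzero, since the surface condition in \eqref{H} forces $2gh-Q<0$ there, so the second line is a regular oblique derivative condition. The quantities $b_m,c_m,g_m,\psi_m$ are obtained by applying the Leibniz rule (and Fa\`a di Bruno's formula) to the nonlinearities in \eqref{WH} and in the surface condition; differentiating those rational expressions in $h_q,h_p$ does produce terms linear in $v_m$ and $\nabla v_m$, but their coefficients are built from $h$ and $v_0,\dots,v_{m-1}$ only, have $C^\alpha$-norm $O(m)$, and are absorbed into $b_m$ and $c_m$, so that $g_m$ and $\psi_m$ involve $v_0,\dots,v_{m-1}$ only.

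\emph{Step 1 (qualitative regularity).} I would first treat $m=1$. Forming the difference quotient $\tau^{-1}\bigl(h(\,\cdot+\tau,\,\cdot\,)-h\bigr)$ of two weak solutions of \eqref{WH}, the $\Gamma(p)$ contributions cancel, and the difference quotient solves a divergence-form equation whose coefficients converge (using $h\in C^{1+\alpha}(\ov\0)$) to the entries of $A$ and are bounded in $C^\alpha$ uniformly in $\tau$; a uniform Schauder estimate for such equations followed by passage to the limit $\tau\to0$ shows that $h_q\in C^{1+\alpha}(\ov\0)$ is a weak solution of $\mathrm{div}(A\nabla h_q)=0$ with the stated boundary conditions (these follow by differentiating the last two equations of \eqref{H} along the boundary). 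Inductively, assuming $v_1,\dots,v_{m-1}\in C^{1+\alpha}(\ov\0)$, the same difference-quotient argument applied to $v_{m-1}$ shows that $v_m=\p_q v_{m-1}\in C^\alpha(\ov\0)$ is a weak solution of the displayed problem, whose coefficients and data now lie in $C^\alpha(\ov\0)$; Schauder regularity for such mixed oblique/Dirichlet problems with $C^\alpha$ data then yields $v_m\in C^{1+\alpha}(\ov\0)$. This proves $\p_q^m h\in C^{1+\alpha}(\ov\0)$ for all $m\in\N$.

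\emph{Step 2 (the estimate \eqref{E}).} With the qualitative regularity in hand, I would run the a priori Schauder estimate for the displayed problem, keeping $b_m\cdot\nabla v_m+c_m v_m$ inside the operator: $\|v_m\|_{1+\alpha}\le C_m\bigl(\|g_m\|_\alpha+\|\psi_m\|_\alpha+\|v_m\|_0\bigr)$. This is not circular, because $v_m=\p_q v_{m-1}$ gives $\|v_m\|_0\le\|v_{m-1}\|_{1+\alpha}$; and since $\|v_m\|_0$ is kept on the right, interpolation lets one absorb the $O(m)$ lower-order terms at the cost of only a polynomial factor, so $C_m$ may be taken polynomial in $m$. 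Expanding $g_m,\psi_m$ by the Leibniz/Fa\`a di Bruno formulas exhibits them as binomially weighted sums of products of norms $\|v_j\|_{1+\alpha}$, $j\le m-1$, times fixed constants depending on $h$ and $h_p^{-1}$; inserting the inductive bounds for those norms reduces \eqref{E} to a Cauchy-product inequality of the schematic form
\[
C_m\!\!\sum_{r\ge2}\ \sum_{\substack{m_1+\cdots+m_r=m\\ 1\le m_i\le m-1}}\binom{m}{m_1,\dots,m_r}\ \prod_{i=1}^{r}L^{m_i-2}(m_i-3)!\ \le\ L^{m-2}(m-3)!,
\]
which holds for $L$ sufficiently large once the finitely many small values of $m$ are handled by enlarging $L$: indeed $(m-3)!$ dominates the polynomial $C_m$, and the normalized sums $\sum_{m_1+\cdots+m_r=m}\binom{m}{m_1,\dots,m_r}\prod_i(m_i-3)!\big/(m-3)!$ decay geometrically in $r$ --- this is the familiar fact that sequences obeying such factorial bounds form an algebra under Cauchy products.

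The main obstacle is Step 2: one has to organize the iterated Leibniz/Fa\`a di Bruno expansions of $g_m$ and $\psi_m$ so that every summand is majorized by a product $\prod_i\|v_{m_i}\|_{1+\alpha}$ with the number of factors and the total order both controlled by $m$ and all indices satisfying $m_i\le m-1$ (the borderline $m_i=m$ contributions being exactly the linear-in-$v_m$ terms that were moved into $b_m,c_m$), and then to check that the resulting sums, together with the $m$-dependence of $C_m,b_m,c_m$, do close against the target $L^{m-2}(m-3)!$. A subordinate but essential point, underlying Step 1, is the rigorous verification --- via the translation invariance and difference quotients --- that after a single $q$-differentiation the equation for $h_q$ carries no trace of the rough vorticity function, which is precisely what reduces the hypothesis on $\gamma$ to mere integrability.
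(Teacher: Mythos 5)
Your Step 1 is essentially the paper's Proposition \ref{P:1}: translation invariance in $q$, difference quotients that cancel the $\Gamma(p)$ term (so the merely integrable vorticity disappears after one $q$-differentiation), uniform Schauder estimates, and passage to the limit; that part is sound and matches the paper. The genuine gap is in Step 2, and it is created by your decision to absorb the terms linear in $v_m$ and $\nabla v_m$ into the operator as $b_m\cdot\nabla v_m+c_m v_m$ with coefficients whose $C^\alpha$-norms grow like $O(m)$. You then assert that the resulting Schauder constant $C_m$ may be taken polynomial in $m$ ``by interpolation''; this is not a standard fact for oblique-derivative Schauder estimates for weak solutions (the dependence of the constant on the lower-order coefficient norms in results such as \cite[Theorem 3]{CS11} is not quantified), and you offer no proof. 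Moreover, even granting polynomial growth of $C_m$, your induction does not close as written: with the fixed ansatz $\|v_{m_i}\|_{1+\alpha}\le L^{m_i-2}(m_i-3)!$ the Cauchy-product sums only yield a gain of a \emph{fixed} power of $L$ per step, so the final inequality would require $C_m\le L^{\mathrm{const}}$ uniformly in $m$, which fails for any fixed $L$ once $C_m$ genuinely grows; you would have to modify the ansatz and redo all the combinatorics. A secondary defect is that your schematic multinomial inequality is meaningless for factors with $m_i\le 2$ (where $(m_i-3)!$ is undefined); these boundary contributions must be handled separately with the fixed norms $\|\p_q^l h\|_{1+\alpha}$, $l\le 2$, which is why the paper builds them into the choice of $L$ in \eqref{E:C}.

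The paper avoids all of this by a different bookkeeping: in \eqref{WH2} the divergence-form operator and the oblique boundary coefficients are built from $h_q,h_p$ only, hence are the same for every $m$, so a single $m$-independent Schauder constant $C_3$ applies in \eqref{SE}; every Leibniz term with $n\ge1$ — including the ones that are linear in $\p_q^m h$ itself, such as the $n=1$ terms of $f_m,g_m$ and the term $-g h_p^2\,\p_q^m h$ hidden in $\varphi_m$ — is placed in the data, where it is controlled via $\|\p_q^m h\|_\alpha\le\|\p_q^{m-1}h\|_{1+\alpha}$ and the induction hypothesis. The second key ingredient you are missing is the intermediate scale: the bounds $\|\p_q^n(1/h_p)\|_\alpha,\ \|\p_q^n h_q\|_\alpha\le L^{n-3/2}(n-2)!$ of Lemmas \ref{L:1} and \ref{L:2}, which provide a uniform factor $L^{-1/2}$ so that the whole right-hand side of \eqref{SE} is at most $K_8L^{m-5/2}(m-3)!$ with $K_8$ independent of $m$ \emph{and} of $L$; choosing $L\ge K_8^2$ then closes the induction, and no growth of the elliptic constant ever has to be beaten. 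Unless you either prove a quantitative, $m$-uniform Schauder estimate for your $m$-dependent operators and adapt the factorial ansatz accordingly, or restructure the argument as above, your Step 2 remains a plan rather than a proof.
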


We enhance that if $\gamma\in L_r((p_0,0))$ for some $r>2$ then any solution $h\in W^2_r(\0)$ of \eqref{H}-\eqref{CH} is a weak solution\footnote{Recall that Sobolev's embedding theorem ensures that
$W_r^2(\Omega)\subset C^{1+\beta}(\overline{\Omega})$ for any $\beta\in (0,(r-2)/r)$.} of the same problem in the sense of \eqref{WH}.
Particularly, our result applies for all  $L_r$-solutions of \eqref{H}-\eqref{CH} found in \cite{CS11} and studied also in \cite{Esch-reg12}, improving  
the previous regularity results \cite{ AC11, LW12x, BM11}.
Furthermore, consider the classical water wave problem for a $2\pi$-periodic traveling gravity water wave with profile $\eta$, i.e.
\begin{equation}\label{eq:2eaa}
\left.
\begin{array}{rllll}
(u-c) u_x+vu_y&=&-P_x&\text{in}&\Omega_\eta,\\[0.1cm]
(u-c) v_x+vv_y&=&-P_y-g&\text{in}&\Omega_\eta,\\[0.1cm]
u_x+v_y&=&0&\text{in}&\Omega_\eta,\\[0.1cm]
P&=&0&\text{on}& y=\eta,\\[0.1cm]
v&=&(u-c)\eta_x&\text{on}&y=\eta,\\[0.2cm]
v&=&0&\text{on}& y=-d,
\end{array}
\right\}
\end{equation}
where we denote by
$
\Omega_\eta:=\{(x,y)\,;\,x\in\mathbb{S}\ \text{and}\ -d<y<\eta(x)\}
$
the ($2\pi$-periodic) fluid body beneath the wave $\eta$, by $P$ the pressure in $\Omega_\eta$, and by $g$ the gravitational constant.
Assume that
$$
(u,\,v,\,P,\,\eta)\in
W_r^1(\Omega_\eta)\times W_r^1(\Omega_\eta)\times W_r^1(\Omega_\eta)\times W_{r}^{2-1/r}(\mathbb{S})
$$ 
satisfies \eqref{eq:2eaa} in $L_r(\Omega_\eta)$. 
If we assume additionally that $u < c $ in $\overline{\Omega}_\eta$, then it is shown in  
\cite{Esch-reg12} that the height function $h$ is well-defined, belongs to $W^2_r(\Omega)$, satisfies \eqref{CH}, and is a $L_r$-solution of \eqref{H}. 
Besides, the vorticity function $\gamma$ is also well-defined and belongs to $L_r((p_0,0))$.
Thus, Theorem \ref{MT} is applicable for such solutions of \eqref{eq:2eaa}.

As a direct consequence of   Theorem \ref{MT} we obtain that the streamlines of the wave corresponding to a weak solution $h$ of \eqref{MT}-\eqref{CH}  are graphs of real-analytic functions.
Indeed, for any fixed $p\in[p_0,0],$ the function $h(\cdot,p)$ is a smooth function on $\s.$
Moreover, in virtue of \eqref{E},  we can use the Lagrange formula for the remainder to obtain that
\begin{align*}
 \left|h(q,p)-\sum_{m=0}^N \frac{\p_q^m h(q_0,p)}{m!}(q-q_0)^m\right|\leq \frac{\|\p_q^{N+1}h\|_0}{(N+1)!}|q-q_0|^{N+1}\leq L^{-2}(L|q-q_0|)^{N+1}\to_{N\to\infty} 0
\end{align*}
if  $L|q-q_0|<1.$
Thus, the Taylor series of $h(\cdot,p)$ at $q_0$  converges  on a small interval containing $q_0$, the length of the interval being independent of $q_0$ and $p$. 
Summarizing, we have:
\begin{cor}\label{COR}
Under the assumptions of Theorem \ref{MT}, given $p\in [p_0,0]$, each streamline $\Psi_p:=\big\{\big(q,h(q,p)-d\big)\in \mathbb{R}^2\,;\, q\in \mathbb{S}\big\}$ 
is a real-analytic curve. 
Particularly, the water wave's free surface $\Psi_0$ is a real-analytic periodic curve.
\end{cor}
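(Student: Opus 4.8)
The plan is to deduce the Corollary directly from the factorial growth bound \eqref{E} of Theorem \ref{MT} by a single application of Taylor's theorem with Lagrange remainder; no further input from the PDE is needed. First I would fix $p\in[p_0,0]$ and record that, by Theorem \ref{MT}, $\p_q^m h\in C^{1+\alpha}(\ov\0)\subset C(\ov\0)$ for every $m\in\N$, so the restriction $h(\cdot,p)$ is a genuine $2\pi$-periodic function of class $C^\infty(\s)$. It therefore suffices to prove that $h(\cdot,p)$ is real-analytic on $\s$ for each fixed $p\in[p_0,0]$.

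Next I would fix $q_0\in\s$, identify the points of $\s$ near $q_0$ with real numbers, and, for $q$ with $L|q-q_0|<1$ (here $L>1$ is the constant of \eqref{E}) and each integer $N\ge2$, write, by Taylor's formula with Lagrange remainder,
\[
h(q,p)-\sum_{m=0}^N\frac{\p_q^m h(q_0,p)}{m!}(q-q_0)^m=\frac{\p_q^{N+1}h(\xi,p)}{(N+1)!}(q-q_0)^{N+1}
\]
for some $\xi$ lying between $q_0$ and $q$. Using $\|\cdot\|_0\le\|\cdot\|_{1+\alpha}$ together with \eqref{E} applied with $m=N+1\ge3$, one has $|\p_q^{N+1}h(\xi,p)|\le L^{N-1}(N-2)!$, and since $(N-2)!/(N+1)!=\big((N+1)N(N-1)\big)^{-1}\le1$, the right-hand side above is bounded in absolute value by $L^{N-1}|q-q_0|^{N+1}=L^{-2}\big(L|q-q_0|\big)^{N+1}$, a quantity that tends to $0$ as $N\to\infty$ because $L|q-q_0|<1$. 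Hence the Taylor series of $h(\cdot,p)$ about $q_0$ converges to $h(\cdot,p)$ on the interval $\{\,q:|q-q_0|<1/L\,\}$.

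Finally I would observe that the length $2/L$ of this interval is independent both of the base point $q_0$ and of the parameter $p$, so $h(\cdot,p)$ coincides near every point of $\s$ with a convergent power series; that is, $h(\cdot,p)$ is real-analytic on $\s$, uniformly in $p\in[p_0,0]$. Since $\Psi_p$ is then the graph of a real-analytic $2\pi$-periodic function --- equivalently, is regularly parametrised by the real-analytic map $q\mapsto\big(q,h(q,p)-d\big)$, whose tangent $\big(1,h_q(q,p)\big)$ never vanishes --- it is a real-analytic curve, and taking $p=0$ gives the real-analyticity of the free surface $\Psi_0$. I do not anticipate any genuine obstacle here: the only points requiring a little care are the bookkeeping of the factorial quotient $(N-2)!/(N+1)!$ and the key observation --- already encoded in the precise shape of \eqref{E} --- that the interval of convergence has length bounded below uniformly in $q_0$ and $p$. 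All of the substantive analytic work has already been carried out in the proof of Theorem \ref{MT}.
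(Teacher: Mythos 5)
Your proposal is correct and follows essentially the same route as the paper: apply Taylor's formula with Lagrange remainder to $h(\cdot,p)$, bound $\|\p_q^{N+1}h\|_0$ via \eqref{E}, observe that the remainder is controlled by $L^{-2}(L|q-q_0|)^{N+1}\to 0$ for $L|q-q_0|<1$, and note that the radius of convergence $1/L$ is uniform in $q_0$ and $p$. The only difference is that you spell out the elementary inequality $(N-2)!/(N+1)!\le 1$, which the paper leaves implicit.
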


\begin{rem} In the proof  of Theorem \ref{MT} we did not make use of the periodicity of $h$ in the variable $q$.
Therefore, the argument preceding  Corollary \ref{COR} yields also the real-analyticity of the streamlines 
and of the wave profile of gravity  solitary waves of finite depth.
\end{rem}

We denote in this paper by  $C_i,$ $i\in\N,$  universal constants which are independent of $m$ and $h$,
while $K$, $K_i,$ $i\in\N,$ denote  constants which may depend on  $\|\p_q^l h\|_{1+\alpha}$ with $0\leq l\leq 2 $ but are independent of $m$.

\section{The proof of the main result}

As a first partial result, we prove the following proposition showing that  $\p_q^m h\in C^{1+\alpha}(\ov \0)$  for all $m\in\N$ with $m\geq1$ and that $\p_q^m h$  is a weak  solution (see \cite{CS11} and Chapter 8 of \cite{GT01})
of an elliptic boundary problem.
Partially, the statement can be found in \cite{CS11}, but we are more precise and determine the coefficients, which depend on the lower $q-$derivatives of $h$, of the equations solved by $\p_q^m h$.
The exact  expressions of these coefficients are very important when proving  Theorem \ref{MT}.
\begin{prop}\label{P:1}
Given $m\in\N $ with $m\geq1,$ the function $\p_q^m h$ belongs to $ C^{1+\alpha}(\ov \0)$ and it is a weak solution of the elliptic boundary value problem
 \begin{equation}\label{WH2}
\left\{
\begin{array}{rllll}
\left(\frac{1}{h_p}\p_q w\right)_q-\left(\frac{h_q}{h_p^2}\p_p w\right)_q-\left(\frac{h_q}{h_p^2}\p_q w\right)_p+\left(\frac{1+h^2_q}{h_p^3}\p_p w\right)_p&=&\left( f_m\right)_q+\left(g_m\right)_p&\text{in}&\0,\\[2ex]
h_qw_q+(2gh-Q)h_pw_p&=&\varphi_m&\text{on} &p=0,\\[2ex]
w&=&0&\text{on}&p=p_0,
\end{array}
\right.
\end{equation}
\end{prop}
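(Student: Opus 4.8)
The plan is to prove Proposition~\ref{P:1} by induction on $m$, differentiating the weak formulation \eqref{WH} formally $m$ times with respect to $q$ and carefully collecting the resulting terms into a principal part (the linear elliptic operator in $w$ displayed on the left of \eqref{WH2}) and a right-hand side in divergence form $(f_m)_q+(g_m)_p$ together with a boundary term $\varphi_m$ depending only on the lower-order $q$-derivatives $\p_q^l h$ with $l\le m-1$ (and on $h$ itself). First I would establish the base case: for $m=1$, differentiate the identity $\left(h_q/h_p\right)_q - \left(\Gamma(p) + (1+h_q^2)/(2h_p^2)\right)_p = 0$ once in $q$. Since $\Gamma(p)$ is independent of $q$, the term $\Gamma_p$ drops out after differentiation — this is precisely the mechanism by which the mere integrability of $\gamma$ suffices, as $\gamma$ never reappears in the equations for $\p_q^m h$. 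Writing $w=h_q$ and expanding $(h_q/h_p)_{qq}$, $\bigl((1+h_q^2)/(2h_p^2)\bigr)_{pq}$ by the product rule, one sees that the second-order terms in $w$ assemble exactly into the stated operator, and all remaining terms involve at most first derivatives of $w$ times coefficients built from $h,h_q,h_p$; these I would absorb into $(f_1)_q+(g_1)_p$. The boundary condition on $p=0$ is obtained by differentiating $1+h_q^2+(2gh-Q)h_p^2=0$ once in $q$ and dividing by $2h_p$ (which is bounded away from $0$ by \eqref{CH}); the Dirichlet condition $w=0$ on $p=p_0$ follows from differentiating $h=0$ on $p=p_0$ tangentially.

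For the inductive step I would assume $\p_q^l h\in C^{1+\alpha}(\ov\0)$ for all $l\le m$ and that $\p_q^m h$ is a weak solution of \eqref{WH2} with coefficients in $C^\alpha(\ov\0)$ and right-hand-side data $f_m,g_m\in C^\alpha(\ov\0)$, $\varphi_m\in C^\alpha(\s)$. Schauder theory for weak solutions of divergence-form elliptic equations with $C^\alpha$ coefficients and $C^\alpha$ data in divergence form (as in Chapter~8 of \cite{GT01}, or the versions used in \cite{CS11}) then gives $\p_q^m h=w\in C^{1+\alpha}(\ov\0)$; combined with the regularity already assumed for lower derivatives, this yields $\p_q^{m+1}h = \p_q w \in C^\alpha(\ov\0)$, and actually $\p_q^{m+1}h\in C^{1+\alpha}(\ov\0)$ once one observes that $\p_q w$ itself satisfies the analogous system with index $m+1$, whose coefficients and data again lie in $C^\alpha$ because they are polynomial (with denominators that are powers of $h_p$) in the $\p_q^l h$ for $l\le m$. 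To close the induction one must verify that differentiating \eqref{WH2} once more in $q$ reproduces a system of the same structural form with $w$ replaced by $\p_q w$ — the principal part is unchanged (its coefficients do not depend on $q$-derivatives of order higher than needed in a way that breaks the pattern, since differentiating a coefficient only produces lower-order contributions that move to the right-hand side), and the extra terms coming from differentiating the old right-hand side $(f_m)_q+(g_m)_p$ and the coefficients of the principal part all land in divergence form $(f_{m+1})_q+(g_{m+1})_p$ with $f_{m+1},g_{m+1}\in C^\alpha$.

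The main obstacle I anticipate is purely bookkeeping: organizing the Leibniz-rule expansion so that, after each differentiation, every term is either part of the fixed principal operator or manifestly of the form $\p_q(\text{something in }C^\alpha)+\p_p(\text{something in }C^\alpha)+\text{boundary contribution}$, with the ``somethings'' expressed as explicit rational functions of $h, \p_q h,\dots,\p_q^m h$ and their first $p$-derivatives, having only powers of $h_p$ in the denominator. Keeping these expressions explicit is essential, because — as the excerpt stresses — the exact form of $f_m,g_m,\varphi_m$ is what will later feed the quantitative estimate \eqref{E} in the proof of Theorem~\ref{MT}; so at this stage I would record recursive formulas for $f_m,g_m,\varphi_m$ in terms of $f_{m-1},g_{m-1},\varphi_{m-1}$ and the newly appearing derivative $\p_q^m h$, rather than merely asserting their existence. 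A secondary point to be careful about is the precise notion of ``weak solution'' at the boundary $p=0$: one tests \eqref{WH2} against $C^1(\ov\0)$ functions that vanish on $p=p_0$ but need not have compact support, and the boundary term $\varphi_m$ arises from integration by parts in $p$; I would make sure the test-function class is consistent with the one used in \cite{CS11} so that the Schauder estimates quoted there apply verbatim.
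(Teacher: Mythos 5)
There is a genuine gap, and it is exactly at the point you dismiss as ``purely bookkeeping.'' You propose to differentiate the weak identity \eqref{WH} formally $m$ times in $q$ and read off \eqref{WH2} for $w=\p_q^m h$. But $h$ is only known to lie in $C^{1+\alpha}(\ov\0)$, so at the first step $w=h_q$ is merely $C^\alpha$: the product-rule expansion that assembles the principal part $\bigl(\tfrac{1}{h_p}\p_q w\bigr)_q-\bigl(\tfrac{h_q}{h_p^2}\p_p w\bigr)_q-\bigl(\tfrac{h_q}{h_p^2}\p_q w\bigr)_p+\bigl(\tfrac{1+h_q^2}{h_p^3}\p_p w\bigr)_p$ requires $\p_q w$ and $\p_p w$ (i.e.\ second derivatives of $h$) to exist as functions, which is precisely what has to be proved. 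Differentiating the distributional identity is legitimate, but it does not produce a divergence-form equation \emph{in} $w$ with the stated coefficients, and Schauder theory for weak solutions cannot be invoked before one has a bona fide weak solution of sufficient regularity. Your inductive step has the same circularity: you assume ``$\p_q^m h$ is a weak solution of \eqref{WH2}'' (which already presupposes $\p_q^m h$ is weakly differentiable), let Schauder reproduce $\p_q^m h\in C^{1+\alpha}$, and then assert that $\p_q w$ ``satisfies the analogous system with index $m+1$'' — but justifying that assertion for a function only known to be $C^\alpha$ is the whole difficulty, and formal differentiation of the equation for $w$ does not supply it. The boundary condition raises the same issue: differentiating $1+h_q^2+(2gh-Q)h_p^2=0$ in $q$ uses $h_{qq}$ and $h_{qp}$ pointwise.

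The paper closes this gap with the difference-quotient device, which your outline never invokes even though you correctly sense the underlying translation invariance (your remark that $\Gamma(p)$ drops out). One sets $u^\e=(\tau_\e h-h)/\e$ (and, inductively, $w^\e=[\tau_\e(\p_q^{m-1}h)-\p_q^{m-1}h]/\e$); these quotients are automatically in $C^{1+\alpha}(\ov\0)$, and because $\tau_\e h$ is again a weak solution of \eqref{H}, they satisfy \emph{exactly} (no formal differentiation) an elliptic oblique-derivative problem of the form \eqref{WH3}/\eqref{WH4}, with coefficients built from $h$ and $\tau_\e h$ that are uniformly elliptic, uniformly oblique (via \eqref{OC} and \eqref{CH}), and uniformly bounded in $C^\alpha$ for small $\e$. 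The Schauder estimate of \cite[Theorem 3]{CS11} then gives a bound on $\|u^\e\|_{1+\alpha}$ (resp.\ $\|w^\e\|_{1+\alpha}$) uniform in $\e$, compactness yields convergence in $C^1(\ov\0)$ to $\p_q^m h$, and passing to the limit $\e\to0$ delivers simultaneously the new regularity $\p_q^m h\in C^{1+\alpha}(\ov\0)$ and the weak formulation \eqref{WH2} with the explicit $f_m$, $g_m$, $\varphi_m$. Your plan contains several correct ingredients (induction on $m$, the Schauder input from \cite{CS11}, the insistence on explicit recursive formulas for $f_m,g_m,\varphi_m$, the observation that the vorticity never reappears), but without the difference quotients — or an equivalent regularization in the $q$-variable exploiting translation invariance — the regularity gain at each step is not established, so the proof as proposed does not go through.
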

whereby $f_m, g_m, \varphi_m\in C^\alpha(\ov\0)$   are given by 
\begin{align*}
f_m:=&\sum_{n=1}^{m-1}\begin{pmatrix}m-1\\n\end{pmatrix}\left[-\p_q^n\left(\frac{1}{h_p}\right)\p_q(\p_q^{m-n}h) +\p_q^n\left(\frac{h_q}{h_p^2}\right)\p_p(\p_q^{m-n}h)\right],\\
g_m:=&\sum_{n=1}^{m-1}\begin{pmatrix}m-1\\n\end{pmatrix}\left[ \p_q^n\left(\frac{h_q}{h_p^2}\right)\p_q(\p_q^{m-n}h) -\p_q^n\left(\frac{1+h^2_q}{h_p^3}\right)\p_p(\p_q^{m-n}h)\right],
\end{align*}
and
\begin{align*}
\varphi_m:=&-2^{-1}(2gh-Q)\sum_{n=1}^{m-1} \begin{pmatrix}m\\n\end{pmatrix}(\p_q^nh_p)(\p_q^{m-n}h_p)-g\sum_{n=1}^m\begin{pmatrix}m\\n\end{pmatrix}(\p_q^nh)(\p_q^{m-n}h_p^2).
\end{align*}
\begin{proof}
For every $\e\in(0,1),$ let $\tau_\e f$ denote the translation in the $q$-direction of a given function $f\,:\,{\overline{\Omega}}\to \mathbb{R}$, i.e. $\tau_\e f(q,p):=f(q+\e,p)$ for $(q,p)\in{\overline{\Omega}}$. 
Further we define the difference quotient $u^\e:=(\tau_\e h-h)/\e\in C^{1+\alpha}(\ov\0)$ and, because $\tau_\e h$ is also a weak solution of \eqref{H}, we obtain that $u^\e$  solves the following generalized elliptic  problem
\begin{equation}\label{WH3}
\left\{
\begin{array}{rllll}
\left(a_{11}^\e\p_q u^\e\right)_q+\left(a_{12}^\e\p_p u^\e\right)_q
+\left(a_{21}^\e\p_q u^\e\right)_p+\left(a_{22}^\e\p_p u^\e\right)_p&=&0&\text{in}&\0,\\[1ex]
(h_q+(\tau_\e h)_q) u_q^\e+(2g(\tau_\e h)_p-Q)(h_p+(\tau_\e h)_p)u_p^\e&=&-2gh_p^2u^\e&\text{on} &p=0,\\[1ex]
u^\e&=&0&\text{on}&p=p_0,
\end{array}
\right.
\end{equation}
whereby 
\begin{align*}
 (a_{ij}^\e)=
 \begin{pmatrix}
  \displaystyle \frac{1}{(\tau_\e h)_p}& \displaystyle -\frac{h_q}{h_p(\tau_\e h)_p}\\[2ex]
  \displaystyle -\frac{h_q+(\tau_\e h)_q}{ 2(\tau_\e h)_p^2} & \displaystyle  \frac{(h_p+(\tau_\e h)_p)(1+h_q^2)}{2h^2_p(\tau_\e h)^2_p}
 \end{pmatrix}.
\end{align*}
 If $\e\in(0,\e_0)$ and  $\e_0$ is sufficiently small, then the matrix $(a_{ij}^\e)$ has positive eigenvalues bounded away from zero  uniformly in $\0$ and $\e\in(0,\e_0)$, while the boundary condition of \eqref{WH3} on $p=0$ is  uniformly oblique as 
 \begin{equation}\label{OC}
 \inf_{\e\in(0,1)}\inf_\0 |(2g(\tau_\e h)_p-Q)(h_p+(\tau_\e h)_p)|>0.
 \end{equation}
 Moreover,   all the coefficients appearing in \eqref{WH3}, as well the right-hand side of the second equation of \eqref{WH3}, are uniformly bounded, with respect to $\e\in(0,\e_0),$ in $C^{\alpha}(\ov\0).$
 Schauder estimates for elliptic problems, cf. \cite[Theorem 3]{CS11}, guarantee  now the existence of a constant $K>0$ such that
 \begin{equation}\label{EQQ}
 \|u^\e\|_{ 1+\alpha }\leq K\|u^\e\|_{\alpha}\leq K\|h\|_{1+\alpha}
 \end{equation}
 for all $\e\in(0,\e_0)$.
 Consequently, $(u^\e)_\e$ is bounded in $C^{1+\alpha}(\ov\0)$ and a subsequence of it converges in $C^{1}(\ov\0)$ towards $h_q$.
 In view of the estimate \eqref{EQQ}, it follows  that $h_q\in C^{1+\alpha}(\ov\0).$ 
 Letting $\e\to0$ in \eqref{WH3}, we find due to the convergence in $C^{1}(\ov\0)$ that $\p_qh$ solves \eqref{WH2} when $m=1$.

 For the general case, let us assume that $\p_q^n h\in C^{1+\alpha}(\ov \0)$ for all $1\leq n\leq m-1.$
 We are left to prove that $\p_q^m h\in C^{1+\alpha}(\ov\0)$ is the solution of \eqref{WH2}.
 Similarly as before, we define the quotient $w^\e:=\left[\tau_\e(\p_q^{m-1} h)-\p_q^{m-1}h\right]/\e\in C^{1+\alpha}(\ov\0)$, and we observe that it solves the following problem
 \begin{equation}\label{WH4}
\left\{
\begin{array}{rllll}
 \left(\frac{1}{h_p}\p_q w^\e\right)_q-\left(\frac{h_q}{h_p^2}\p_p w^\e\right)_q-\left(\frac{h_q}{h_p^2}\p_q w^\e\right)_p+\left(\frac{1+h^2_q}{h_p^3}\p_p w^\e\right)_p&=&(f_m^\e)_q+(g_m^\e)_p&\text{in}&\0,\\[2ex]
(h_q+(\tau_\e h)_q) w_q^\e+(2g(\tau_\e h)_p-Q)(h_p+(\tau_\e h)_p)w_p^\e&=&\varphi_m^\e&\text{on} &p=0,\\[2ex]
u^\e&=&0&\text{on}&p=p_0,
\end{array}
\right.
\end{equation}
with
\begin{align*}
f_m^\e:=&\p_q^m (\tau_\e h)\frac{(\tau_\e h)_p-h_p}{\e h_p(\tau_\e h)_p}-\frac{\p_p\p_q^{m-1}(\tau_\e h)}{\e}\left(\frac{h_q}{h_p^2}-\frac{(\tau_\e h)_q}{(\tau_\e h)_p^2}\right)+\frac{\tau_\e f_{m-1}-f_{m-1}}{\e},\\
g_m^\e:=&\frac{ \p_p\p_q^{m-1 }(\tau_\e h)}{\e}\left(\frac{1+h_q^2}{h_p^2}-\frac{1+(\tau_\e h)_q^2}{(\tau_\e h)_p^2}\right)
-\frac{ \p_q^{m }(\tau_\e h)}{\e}\left(\frac{h_q}{h_p^2}-\frac{(\tau_\e h)_q}{(\tau_\e h)_p^2}\right)+\frac{\tau_\e g_{m-1}-g_{m-1}}{\e},\\
\varphi_m^\e:=&\p_p \p_q^{m-1}(\tau_\e h)\frac{(2gh-Q)h_p-(2g\tau_\e h-Q)(\tau_\e h)_p}{\e}-\p_q^{m}(\tau_\e h)\left(\frac{(\tau_\e h)_q-h_q}{\e}\right)\\
&+\frac{\tau_\e \varphi_{m-1}-\varphi_{m-1}}{\e}.
\end{align*}
We note that the induction assumption  ensures  that all coefficients  and the terms on the right-hand side of the equations of  \eqref{WH4} are bounded in  $C^{\alpha}(\ov\0),$ uniformly in $\e\in(0,1).$
In virtue of \eqref{OC} and noticing that the matrix $(a_{ij}^0)$ has positive eigenvalues bounded away from zero  uniformly in $\0$, we may apply the Schauder estimate  \cite[Theorem 3]{CS11}  to problem \eqref{WH4} 
to find  a positive constant $M$  such that
\begin{equation*}
 \|w^\e\|_{C^{1+\alpha}(\ov\0)}\leq M 
 \end{equation*}
 for all $\e\in(0,1).$
Consequently, a subsequence of of $(w^\e)_\e$ converges in $C^{1}(\ov\0)$ towards $\p_q^mh$.
The same arguments as above yield that  $\p_q^mh \in C^{1+\alpha}(\ov\0)$ and, letting $\e\to0$ in the equations of \eqref{WH4}, we find that $\p_q^mh $ is the weak solution of \eqref{WH2}.
\end{proof}

Before proving the main theorem, we need the following auxiliary results.
\begin{lemma}\label{L:1}  Let $N\geq 3$ and assume that $\p_q^n u_i\in C^\alpha(\ov \0)$ for all $0\leq n\leq N$ and $1\leq i\leq 5.$
\begin{itemize}
\item[$(i)$] If   $L\geq 1$ and $\|\p_q^n u_i\|_\alpha\leq L^{n-3/2}(n-2)! $
for all $2\leq n\leq N,$
then there exists a constant $C_0>1$ with the property that
\begin{equation}\label{E:1}
 \|\p_q^n(u_1u_2u_3u_4u_5)\|_\alpha\leq C_0\left(1+\sum_{i=1}^5\sum_{l=0}^1\|\p_q^l u_i\|_\alpha\right)^{16}L^{n-3/2}(n-2)!\qquad\text{for all $2\leq n\leq N.$}
\end{equation}
\item[$(ii)$] If   $L\geq 1$ and $\|\p_q^n u_i\|_\alpha\leq L^{n-1}(n-2)! $
for all $2\leq n\leq N,$ then there exists a constant $C_1>1$ such  that
\begin{equation}\label{E:2}
 \|\p_q^n(u_1u_2u_3)\|_\alpha\leq C_1\left(1+\sum_{i=1}^3\sum_{l=0}^1\|\p_q^l u_i\|_\alpha\right)^6 L^{n-1}(n-2)!\qquad\text{for all $2\leq n\leq N.$}
\end{equation}
\item[$(iii)$] If   $L\geq 1$ and $\|\p_q^n u_i\|_\alpha\leq L^{n-2}(n-3)! $
for all $3\leq n\leq N,$ then there exists a constant $C_2>1$ with the property that
\begin{equation}\label{E:9}
 \|\p_q^n(u_1u_2)\|_\alpha\leq C_2\left(1+\sum_{i=1}^2\sum_{l=0}^2\|\p_q^l u_i\|_\alpha\right)^2 L^{n-2}(n-3)!\qquad\text{for all $3\leq n\leq N.$}
\end{equation}
 \end{itemize}
 The constants $C_0$, $C_1,$ and $C_2$ do not depend on $L$.
\end{lemma}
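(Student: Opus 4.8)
The plan is to deduce all three estimates from a single combinatorial inequality. The two structural inputs are the general Leibniz rule
\[
\p_q^n(u_1\cdots u_k)=\sum_{n_1+\cdots+n_k=n}\frac{n!}{n_1!\cdots n_k!}\,\p_q^{n_1}u_1\cdots\p_q^{n_k}u_k
\]
and the Banach algebra property of $C^\alpha(\ov\0)$: there is a universal constant $C_*\ge1$ with $\|fg\|_\alpha\le C_*\|f\|_\alpha\|g\|_\alpha$, so that $\|v_1\cdots v_k\|_\alpha\le C_*^{\,k-1}\prod_i\|v_i\|_\alpha$. Together these give, for $k\in\{5,3,2\}$,
\[
\|\p_q^n(u_1\cdots u_k)\|_\alpha\le C_*^{\,k-1}\sum_{n_1+\cdots+n_k=n}\frac{n!}{n_1!\cdots n_k!}\prod_{i=1}^k\|\p_q^{n_i}u_i\|_\alpha .
\]
First I would fix the threshold $t$ ($t=2$ in $(i)$ and $(ii)$, $t=3$ in $(iii)$), observe that in each case $t$ equals the integer $d\in\{2,3\}$ occurring in the factorial $(n-d)!$ of the corresponding hypothesis and that the low-order norms there are precisely those of order $\le t-1$, and split the last sum according to $S:=\{i:\ n_i\ge t\}$. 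For $i\notin S$ one has $n_i\le t-1$, so $\|\p_q^{n_i}u_i\|_\alpha\le A:=1+\sum_i\sum_{l=0}^{t-1}\|\p_q^l u_i\|_\alpha\ge1$; there are at most $t^{k}$ choices for these indices and $\prod_{i\notin S}n_i!\ge1$. For $i\in S$ one inserts the hypothesis $\|\p_q^{n_i}u_i\|_\alpha\le L^{\,n_i-c}(n_i-d)!$ with the pertinent $c\in\{3/2,1,2\}$.

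The heart of the argument is the claim that for every integer $j\ge1$, every pair $t\ge d\ge2$, and every $M\ge jt$,
\[
\sum_{\substack{m_1+\cdots+m_j=M\\ m_i\ge t}}\frac{M!}{m_1!\cdots m_j!}\,(m_1-d)!\cdots(m_j-d)!\ \le\ C_j\,(M-d)! ,
\]
with $C_j$ depending only on $j,d,t$. I would prove this by induction on $j$: the case $j=1$ is an identity, and the inductive step reduces, after summing out $m_j$ and applying the inductive hypothesis to the remaining indices, to the two-factor estimate $\sum_{m=t}^{M-t}\binom{M}{m}(m-d)!(M-m-d)!\le C(M-d)!$. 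For the latter one rewrites $\binom{M}{m}(m-d)!(M-m-d)!=[M]_d/([m]_d[M-m]_d)$ with $[x]_d:=x(x-1)\cdots(x-d+1)$, bounds $[M]_d/[M-m]_d$ by a constant on $t\le m\le M/2$ (and symmetrically), and invokes the convergence of $\sum_{m\ge t}1/[m]_d$ — which is exactly where $d\ge2$ enters. Feeding the core claim back in, the contribution of a fixed $S$ with $|S|=j\ge1$ is, after factoring $n!/M!$ out of the multinomial coefficient (here $M:=n-r$, $r:=\sum_{i\notin S}n_i\le(t-1)k$), bounded by a $(k,d,t)$-constant times $A^{k}L^{\,n-c}(n-d)!$: one uses $L\ge1$ (so $L^{\,n-r-jc}\le L^{\,n-c}$), $(n-r-d)!\le(n-d)!$, and the elementary bound $n!\,(M-d)!/M!\le(1+d)^{(t-1)k}(n-d)!$. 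The single remaining case $S=\emptyset$ forces $n\le(t-1)k$, leaving only finitely many multi-indices, each bounded by a constant times $A^{k}$ and hence by a constant times $A^{k}L^{\,n-c}(n-d)!$ since $L^{\,n-c}(n-d)!\ge1$ on that range.

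Summing over the at most $2^{k}$ subsets $S$ and collecting all constants depending only on $k\le5$, $d$ and $t$ into one universal constant yields a bound $C\,A^{k}L^{\,n-c}(n-d)!$; since $A\ge1$ one may replace $A^{k}$ by $A^{16}$, $A^{6}$, $A^{2}$ for $k=5,3,2$, which is exactly \eqref{E:1}, \eqref{E:2}, \eqref{E:9} with $C_0,C_1,C_2$ manifestly independent of $L$. The step I expect to be the main obstacle is the two-factor combinatorial estimate underlying the induction — equivalently, the uniform bound on $\sum_m[M]_d/([m]_d[M-m]_d)$; once that is in place, the bookkeeping over the subsets $S$ and the lower-order indices is routine, and the half-integer exponent $c=3/2$ in $(i)$ plays no role beyond $c>0$.
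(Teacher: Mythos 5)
Your argument is correct, but it is organized quite differently from the paper's proof. The paper never touches the full multinomial expansion: it proves only the two-factor estimate, by splitting the binomial Leibniz sum into the boundary terms $k\in\{0,1,n-1,n\}$ (controlled by the low-order norms) and the middle block $2\le k\le n-2$ (controlled, exactly as in your base case, by the convergence of $\sum_k k^{-d}$ with $d\ge 2$), and then reaches products of four and five factors by a normalization trick: it divides $u_1u_2$ and $u_3u_4$ by their own two-factor bounds so that the normalized products again satisfy the inductive hypothesis, and reapplies the same two-factor estimate; this iteration is also what produces the (deliberately non-sharp) exponents $16$ and $6$. You instead expand $\p_q^n(u_1\cdots u_k)$ multinomially, split the multi-index sum according to which factors carry at least $t$ derivatives, and prove a general $j$-factor combinatorial inequality by induction on $j$, whose two-factor base case is the same $\sum_m [M]_d/([m]_d[M-m]_d)<\infty$ computation that drives the paper's middle-block estimate. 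So the analytic core is identical, while the bookkeeping differs: the paper's route is shorter to write and avoids all multinomial combinatorics at the price of the iterated normalization and inflated exponents, whereas yours handles any number of factors uniformly, yields the natural exponent $A^{k}$ (which you then enlarge to $16$, $6$, $2$ using $A\ge1$ to match the statement), and makes transparent that the exponents $c=3/2,1,2$ enter only through $L\ge1$ and $c>0$; your checks at the delicate points (the uniform bound on $[M]_d/[M-m]_d$ for $m\le M/2$, the factor $n!(M-d)!/(M!(n-d)!)\le(1+d)^{r}$ with $r\le(t-1)k$, and the finitely many terms when no factor carries $\ge t$ derivatives, where $L^{\,n-c}(n-d)!\ge1$) are all sound.
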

\begin{proof} We prove only $(i)$, the proof of $(ii)$  and $(iii)$ being similar.
Given $2\leq n\leq N,$  we have
\begin{equation}\label{MS}
 \p_q^n(u_1u_2)=\left(\sum_{k=0}^1+\sum_{k=2}^{n-2} +\sum_{k=n-1}^{n}\right)\begin{pmatrix}n\\k\end{pmatrix}(\p_q^{k}u_1)\p_q^{n-k}u_2,
 \end{equation}
 and, since $\|u_1u_2\|_\alpha\leq \|u_1\|_\alpha\|u_2\|_\alpha$, we find 
 \begin{align*}
  \left\|\left(\sum_{k=0}^1+\sum_{k=n-1}^{n}\right)\begin{pmatrix}n\\k\end{pmatrix}(\p_q^{k}u_1)\p_q^{n-k}u_2\right\|_\alpha\leq2\left(1+\sum_{i=1}^2\sum_{l=0}^1\|\p_q^l u_i\|_\alpha\right)^2L^{n-3/2}(n-2)!.
 \end{align*}
 On the other hand, if $n\geq4 $  the middle sum in \eqref{MS} does not vanish, and we use the convergence of the series $\sum_n n^{-2}$ to find that
\begin{align*} 
  \left\|\sum_{k=2}^{n-2}\begin{pmatrix}n\\k\end{pmatrix}(\p_q^{k}u_1)\p_q^{n-k}u_2\right\|_\alpha&\leq \sum_{k=2}^{n-2}\frac{n!}{k!(n-k)!} L^{k-3/2}(k-2)!L^{n-k-3/2}(n-k-2)!\\
  &\leq L^{n-3} (n-2)!\sum_{k=2}^{n-2}\frac{n^2}{(k-1)^2(n-k-1)^2}=CL^{n-3}(n-2)!.
 \end{align*}
 Summarizing, we have shown that there exists a constant $C>1$ with 
 \begin{equation}\label{E:3}
 \|\p_q^n(u_1u_2)\|_\alpha\leq C\left(1+\sum_{i=1}^2\sum_{l=0}^1\|\p_q^l u_i\|_\alpha\right)^2L^{n-3/2}(n-2)!\qquad\text{for all $2\leq n\leq N.$}
\end{equation}
Applying the estimate \eqref{E:3} to the functions
\begin{align*}
 v_1:=\frac{u_1u_2}{C\left(1+\sum_{i=1}^2\sum_{l=0}^1\|\p_q^l u_i\|_\alpha\right)^2} \qquad\text{and}\qquad v_2:=\frac{u_3u_4}{C\left(1+\sum_{i=3}^4\sum_{l=0}^1\|\p_q^l u_i\|_\alpha\right)^2},
\end{align*}
 which verify $\|\p_q^n v_i\|_\alpha\leq L^{n-3/2}(n-2)! $ for all $2\leq n\leq N,$ we get that
  \begin{equation*}
 \|\p_q^n(u_1u_2u_3u_4)\|_\alpha\leq C\left(1+\sum_{i=1}^4\sum_{l=0}^1\|\p_q^l u_i\|_\alpha\right)^8L^{n-3/2}(n-2)!
\end{equation*}
for all $3\leq n\leq N.$
Finally, we use the estimate \eqref{E:3} for the functions
\begin{align*}
u_5\qquad\text{and}\qquad u_6:=\frac{u_1u_2u_3u_4}{C\left(1+\sum_{i=1}^4\sum_{l=0}^1\|\p_q^l u_i\|_\alpha\right)^8}
\end{align*}
and obtain the desired estimate \eqref{E:1}.
\end{proof}

We use now Lemma \ref{L:1} to prove the following estimate. 
\begin{lemma}\label{L:2}  Assume that $\p_q^n u\in C^\alpha(\ov \0)$ for all $0\leq n\leq N$, with $N\geq 3,$ and let $C_1$ be the constant determined in Lemma \ref{L:1} $(ii)$.
If  there exists a constant  
\begin{equation}\label{E:CC}
L\geq \|\p_q^2(1/u)\|_\alpha^2+C_1^2\left(1+\sum_{l=0}^1(2\|\p_q^l (1/u)\|_\alpha +\|\p_q^{1+l}u\|_\alpha)\right)^{12},
\end{equation}
such that $\|\p_q^n u\|_\alpha\leq L^{n-2}(n-3)!$
for all $3\leq n\leq N$ and $\inf_{\0} u>0$,
then we have
\begin{equation}\label{E:4}
 \|\p_q^n(1/u)\|_\alpha\leq L^{n-3/2}(n-2)!\qquad\text{for all $2\leq n\leq N.$}
\end{equation}
\end{lemma}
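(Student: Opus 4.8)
The plan is to establish \eqref{E:4} by induction on $n$, running from $n=2$ up to $n=N$. First I would fix notation: set $v:=1/u$, which is legitimate because $\inf_\0 u>0$ and which then satisfies $\p_q^n v\in C^\alpha(\ov\0)$ for $0\le n\le N$; write also $B:=1+\sum_{l=0}^1\big(2\|\p_q^l v\|_\alpha+\|\p_q^{1+l}u\|_\alpha\big)$ for the bracket in \eqref{E:CC}, so that the hypothesis reads $L\ge\|\p_q^2 v\|_\alpha^2+C_1^2B^{12}$ and, in particular, $L\ge1$ and $C_1B^6\le L^{1/2}$. The base case $n=2$ is then immediate: \eqref{E:4} asks for $\|\p_q^2 v\|_\alpha\le L^{1/2}$, and this is contained in $L\ge\|\p_q^2 v\|_\alpha^2$.

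For the induction step the engine is the identity $\p_q v=-(\p_q u)\,v^2$; differentiating it $n-1$ times yields
\begin{equation*}
\p_q^n v=-\p_q^{n-1}\big(\p_q u\cdot v\cdot v\big)\qquad(n\ge1),
\end{equation*}
i.e. $n-1$ derivatives of the \emph{fixed} triple product $\p_q u\cdot v\cdot v$ --- precisely the form Lemma~\ref{L:1}$(ii)$ is designed for. Assuming $4\le n\le N$ and that \eqref{E:4} already holds for all indices in $\{2,\dots,n-1\}$, I would apply Lemma~\ref{L:1}$(ii)$ with $u_1:=\p_q u$, $u_2=u_3:=v$ and with the role of ``$N$'' played by $n-1\ (\ge3)$. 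Its hypotheses hold: for $2\le j\le n-1$ one has $\|\p_q^j(\p_q u)\|_\alpha=\|\p_q^{j+1}u\|_\alpha\le L^{(j+1)-2}((j+1)-3)!=L^{j-1}(j-2)!$ by the standing bound on $\p_q^m u$ (valid since $3\le j+1\le n\le N$), and $\|\p_q^j v\|_\alpha\le L^{j-3/2}(j-2)!\le L^{j-1}(j-2)!$ by the induction hypothesis together with $L\ge1$; moreover the quantity $1+\sum_{i=1}^3\sum_{l=0}^1\|\p_q^l u_i\|_\alpha$ occurring in \eqref{E:2} is exactly $B$. Reading the conclusion of Lemma~\ref{L:1}$(ii)$ at its top index $n-1$ then gives
\begin{equation*}
\|\p_q^n v\|_\alpha=\big\|\p_q^{n-1}\big(\p_q u\cdot v\cdot v\big)\big\|_\alpha\le C_1 B^6\,L^{n-2}(n-3)!\le L^{n-3/2}(n-2)!,
\end{equation*}
the last inequality because $n-2\ge1$ and $C_1B^6\le L^{1/2}$; this is \eqref{E:4} for $n$, so the induction closes the range $4\le n\le N$.

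There remains the case $n=3$, which I expect to be the main obstacle, since Lemma~\ref{L:1} is unavailable there (its index would be $2<3$, and forcing it up circles back to the unknown $\|\p_q^3 v\|_\alpha$). I would handle it by the direct expansion
\begin{equation*}
\p_q^3 v=-\p_q^2\big(\p_q u\cdot v^2\big)=-\big(\p_q^3 u\cdot v^2+4\,\p_q^2 u\cdot v\,\p_q v+2\,\p_q u\,(\p_q v)^2+2\,\p_q u\cdot v\,\p_q^2 v\big),
\end{equation*}
estimating the $C^\alpha$-norm termwise: the first term is $\le\|\p_q^3 u\|_\alpha\|v\|_\alpha^2\le L\|v\|_\alpha^2$ by the standing bound $\|\p_q^3 u\|_\alpha\le L$, the last involves the factor $\|\p_q^2 v\|_\alpha\le L^{1/2}$ from the base case, and all remaining factors are $\le B$; then $L\ge\|\p_q^2 v\|_\alpha^2+C_1^2B^{12}$ should absorb the numerical coefficients and the powers of $B$ so that $\|\p_q^3 v\|_\alpha\le L^{3/2}=L^{3-3/2}(3-2)!$. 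The hard part here is the arithmetic of constants --- it is exactly for this low-order step that the hypothesis on $L$ carries the large exponent $12$ and the extra summand $\|\p_q^2 v\|_\alpha^2$, and one must keep the range over which Lemma~\ref{L:1} is legitimate in step with the induction. Once $n=3$ is in place, the induction of the preceding paragraph finishes the proof.
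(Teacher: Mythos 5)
Your base case and your induction step for $n\ge 4$ are exactly the paper's argument: the same identity $\p_q^n(1/u)=-\p_q^{n-1}\big(\p_q u\cdot(1/u)\cdot(1/u)\big)$, the same verification that the three factors satisfy the hypotheses of Lemma \ref{L:1}$(ii)$ with bracket equal to your $B$, and the same absorption $C_1B^6\le L^{1/2}$ coming from \eqref{E:CC}. The divergence is your separate treatment of $n=3$, and that is where the proposal has a genuine gap. Your Leibniz expansion of $\p_q^3(1/u)$ is correct and leads to a bound of the form $LB^2+6B^3+2B^2L^{1/2}$, but the concluding inequality $\le L^{3/2}$ is only asserted (``should absorb''), not proved. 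It does not follow from \eqref{E:CC} together with the only qualitative information available at the level of the statements, namely $C_1>1$ and $B\ge 1$: if $C_1$ and $B$ were close to $1$ and $\|\p_q^2(1/u)\|_\alpha$ small, then $L$ could be close to $1$, and then the term $6B^3$ alone already exceeds $L^{3/2}$. Closing it requires quantitative input (for instance, noting that one may enlarge $C_1$ so that $C_1\ge 2$, whence $L\ge C_1^2B^{12}\ge 4$, after which the arithmetic does go through) --- precisely the ``arithmetic of constants'' that you flag as the hard part but do not carry out.

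Moreover, the special case is unnecessary, and this is how the paper avoids the issue: it applies Lemma \ref{L:1}$(ii)$ at top index $m-1=2$ as well, for every $m\ge 3$. The circularity you worry about (the lemma is stated with $N\ge 3$, and a hypothesis at order $3$ would involve the unknown $\|\p_q^3(1/u)\|_\alpha$) is resolved by observing that the estimate \eqref{E:2} at a given order $n$ only uses the bounds on derivatives of order $\le n$; at $n=2$ it is an immediate consequence of the algebra property of $C^\alpha(\ov\0)$ and the order-two hypotheses, which at stage $m=3$ are available ($\|\p_q^3u\|_\alpha\le L$ and $\|\p_q^2(1/u)\|_\alpha\le L^{1/2}\le L$). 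With this observation a single induction step covers all $3\le n\le N$, and the only constant bookkeeping needed is the inequality $C_1B^6\le L^{1/2}$ that \eqref{E:CC} was designed to supply. So either import that observation, or complete the $n=3$ arithmetic with an explicit lower bound on $L$; as written, the $n=3$ step is unproved.
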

\begin{proof}
 The proof follows by induction.
 By the choice of $L$, it is clear that the relation \eqref{E:4} is satisfied when $n=2$.
 So, let us assume that \eqref{E:4} is satisfied for all $2\leq n\leq m-1,$ with $3\leq m\leq N.$
 In order to prove the assertion for $n=m$, we write
 \[
 \p_q^m(1/u)=\p_q^{m-1}(u_1u_2u_3)
 \]
 whereby $u_1:=-\p_qu$ and $u_2=u_3:=1/u.$
 Our hypothesis and the induction assumption ensure that for all $2 \leq n\leq m-1$ we have 
 \begin{align*}
 &\|\p_q^n u_1\|_\alpha=\|\p_q^{n+1}u \|_\alpha\leq L^{n-1}(n-2)!,\\
 &\|\p_q^nu_2\|_\alpha=\|\p_q^{n}(1/u)\|_\alpha\leq L^{n-3/2}(n-2)!\leq L^{n-1}(n-2)!,
 \end{align*}
 the last inequality being a consequence of the fact that $L>1.$
 Whence, Lemma  \ref{L:1} $(ii)$ and  the relation \eqref{E:CC} combine to
 \begin{align*}
  \|\p_q^m(1/u)\|_\alpha&\leq C_1\left(1+\sum_{l=0}^1(2\|\p_q^l (1/u)\|_\alpha +\|\p_q^{1+l}u\|_\alpha)\right)^6L^{m-2}(m-3)!\\
  &\leq L^{m-3/2}(m-2)!,
 \end{align*}
which completes the proof.
\end{proof}

We come now to the proof of our main result. 

\begin{proof}[Proof of Theorem \ref{MT}] Let $h$ be a weak solution of \eqref{H}-\eqref{CH} and let $L$ be a positive constant such that
\begin{align}
L\geq& \|\p_q^2(1/h_p)\|_\alpha^2+\|\p_q^2h_q\|_\alpha^2+C_1^2\left(1+\sum_{l=0}^1(2\|\p_q^l (1/h_p)\|_\alpha +\|\p_q^{1+l}h_p\|_\alpha)\right)^{12}+\sum_{l=0}^4\|\p_q^l h\|_{1+\alpha}.\label{E:C}
\end{align}
Then, it is clear that $L\geq1.$
Moreover, the inequality \eqref{E:C} guarantees  that \eqref{E} is satisfied for $m=3$ and $m=4$.
So, let us presuppose that \eqref{E} is true for all $3\leq n\leq m-1,$ whereby $m\geq5.$
We need to show only  that \eqref{E} is satisfied for $m.$
To this end, let us observe that
\begin{align}\label{S1}
 \max\{\|\p_q^nh_q\|_\alpha,\|\p_q^nh_p\|_\alpha\}\leq \|\p_q^nh\|_{1+\alpha}\leq L^{n-2}(n-3)! \qquad\text{for $3\leq n\leq m-1.$}
\end{align}
This estimate together with the Lemma \ref{L:2}, which we may apply to the function $u=h_p$, cf. \eqref{CH} and \eqref{E:C}, yield
 \begin{align}\label{S2}
 \|\p_q^n(1/h_p)\|_\alpha\leq  L^{n-3/2}(n-2)! \qquad\text{for all $2\leq n\leq m-1.$}
\end{align}
With our choice of $L$ and in view of the induction assumption, we also have that
\begin{align}\label{S3}
 \|\p_q^nh_q\|_\alpha\leq L^{n-3/2}(n-2)! \qquad\text{for $2\leq n\leq m-1.$}
\end{align}
 
Recall that $\p_q^mh$ is the solution of the elliptic problem \eqref{WH2}. 
The arguments used in  the proof of Proposition \ref{P:1}  and the Schauder  estimate derived in  \cite[Theorem 3]{CS11}  ensure the existence of a positive constant $C_3 $ such that
\begin{equation}\label{SE}
 \|\p_q^mh\|_{1+\alpha}\leq C_3\left(\|\p_q^m h\|_0+\|f_m\|_\alpha+\|g_m\|_\alpha+\|\varphi_m\|_\alpha\right),
\end{equation}
meaning that we are left to prove that the right-hand side of relation  \eqref{SE} may be estimated by $L^{m-2}(m-3)!.$

The supremum norm $\|\p_q^m h\|_0$ can be bounded by using the induction assumption only
\begin{align}\label{F1}
\|\p_q^m h\|_0\leq\|\p_q^{m-1} h\|_\alpha\leq L^{m-3}(m-4)!.
\end{align}

The terms appearing in the definition of $f_m$ and $g_m$ can be estimated by using the same scheme. 
Indeed, let us notice that the estimate \eqref{E:1} of Lemma \ref{L:1} together with the estimates \eqref{S2} and \eqref{S3}  ensure the existence of a constant $K_0>1$ with the property that
\begin{align}\label{G:1}
 \max\left\{\left\|\p_q^n\left(\frac{1}{h_p}\right)\right\|_\alpha,
\left\| \p_q^n\left(\frac{h_q}{h_p^2}\right)\right\|_\alpha, \left\|\p_q^n\left(\frac{1+h^2_q}{h_p^3}\right)\right\|_\alpha\right\}\leq K_0L^{n-3/2}(n-2)!
\end{align}
for all $2\leq n\leq m-1.$  
With this preparation, we write the first sum appearing in the definition of $f_m$ as follows 
\begin{align*}
 \sum_{n=1}^{m-1}\begin{pmatrix}m-1\\n\end{pmatrix}\p_q^n\left(\frac{1}{h_p}\right)\p_q(\p_q^{m-n}h)=\left(\sum_{n=1}^1+\sum_{n=2}^{m-3}+\sum_{n=m-2}^{m-1}\right)\begin{pmatrix}m-1\\n\end{pmatrix}\p_q^n\left(\frac{1}{h_p}\right)\p_q(\p_q^{m-n}h), 
\end{align*}
and observe that the convergence of the series $\sum_{n}n^{-2}$ implies
\begin{align*}
 \left\|\sum_{n=2}^{m-3}\begin{pmatrix}m-1\\n\end{pmatrix}\p_q^n\left(\frac{1}{h_p}\right)\p_q(\p_q^{m-n}h)\right\|_\alpha&\leq \sum_{n=2}^{m-3}\begin{pmatrix}m-1\\n\end{pmatrix}\left\|\p_q^n\left(\frac{1}{h_p}\right)\right\|_\alpha\|\p_q^{m-n}h\|_{1+\alpha}\\
 &\leq K_0 L^{m-7/2}\sum_{n=2}^{m-3}\begin{pmatrix}m-1\\n\end{pmatrix}(n-2)!(m-n-3)!\\
 &\leq K_0 L^{m-7/2}(m-3)!\sum_{n=2}^{m-3}\frac{(m-1)^2}{(n-1)^2(m-n-2)^2}\\
 &\leq K_0L^{m-7/2}(m-3)!.  
\end{align*}
On the other hand, it follows readily from \eqref{S2} and the induction assumption that
\begin{align*}
 \left\|\left(\sum_{n=1}^1 +\sum_{n=m-2}^{m-1}\right)\begin{pmatrix}m-1\\n\end{pmatrix}\p_q^n\left(\frac{1}{h_p}\right)\p_q(\p_q^{m-n}h)\right\|_\alpha\leq K_1L^{m-5/2}(m-3)!.
\end{align*}
Since the remaining sums  that appear in the definition of $f_m$ and $g_m$ can be estimated in a similar way, we conclude that
\begin{align}\label{F2}
 \|f_m\|_\alpha+\|g_m\|_\alpha\leq K_2L^{m-5/2}(m-3)!.
\end{align}

It remains to estimate the norm $\|\varphi_m\|_\alpha$.
Because $C^{\alpha}(\ov\0)$ is an algebra, we need to estimate only the terms
\[
T_1:=\sum_{n=1}^{m-1} \begin{pmatrix}m\\n\end{pmatrix}(\p_q^nh_p)\p_q^{m-n}h_p\qquad\text{and}\qquad T_2:=\sum_{n=1}^m\begin{pmatrix}m\\n\end{pmatrix}(\p_q^nh)\p_q^{m-n}h_p^2.
\]
In order to deal with $T_1$, we write
\[
T_1=\left(\sum_{n=1}^2+\sum_{n=3}^{m-3}+\sum_{n=m-2}^{m-1}\right)\begin{pmatrix}m\\n\end{pmatrix}(\p_q^nh_p)\p_q^{m-n}h_p,
\]
and obtain from the induction assumption that
\begin{align*}
 \left\|\left(\sum_{n=1}^2 +\sum_{n=m-2}^{m-1}\right)\begin{pmatrix}m\\n\end{pmatrix}(\p_q^nh_p)\p_q^{m-n}h_p\right\|_\alpha\leq &K_3L^{m-3}(m-3)!.
\end{align*}
On the other hand, if $m\geq 6,$   the induction assumption  together with   the convergence of the series   $\sum_{n}n^{-3}$  imply  
\begin{align*}
 \left\|\sum_{n=3}^{m-3}\begin{pmatrix}m\\n\end{pmatrix}(\p_q^n h_p)\p_q^{m-n}h_p\right\|_\alpha&\leq \sum_{n=3}^{m-3}\begin{pmatrix}m\\n\end{pmatrix}\left\|\p_q^nh\right\|_{1+\alpha}\|\p_q^{m-n}h\|_{1+\alpha}\\
 &\leq \sum_{n=3}^{m-3}\begin{pmatrix}m\\n\end{pmatrix}L^{n-2}(n-3)!L^{m-n-2}(m-n-3)!\\
 &\leq   L^{m-4}(m-3)!\sum_{n=3}^{m-3}\frac{m^3}{(n-2)^3(m-n-2)^3}\\
 &\leq  C_4 L^{m-4}(m-3)!,
\end{align*}
and we conclude that
\begin{align}\label{F3}
\|T_1\|_\alpha\leq K_4 L^{m-3}(m-3)!.
\end{align}

Finally, in order to estimate $T_2,$ we write
\[
T_2=\left(\sum_{n=1}^2+\sum_{n=3}^{m-3}+\sum_{n=m-2}^{m}\right) \begin{pmatrix}m\\n\end{pmatrix}(\p_q^nh)\p_q^{m-n}h_p^2
\]
and infer from the relation \eqref{S1}  and Lemma \ref{L:1} $(iii)$ that there  exists  a constant $K_5>1$ such that $\|\p_q^nh_p^2\|_\alpha\leq K_5L^{n-2}(n-3)!$ for all $3\leq n\leq m-1.$
This estimate combined with the induction assumption guarantee the existence of a constant $K_6>1$ with the property that
\begin{align*}
 \left\|\left(\sum_{n=1}^2+\sum_{n=m-2}^{m}\right) \begin{pmatrix}m\\n\end{pmatrix}(\p_q^nh)\p_q^{m-n}h_p^2\right\|_\alpha&\leq K_6 L^{m-3}(m-3)!
\end{align*}
and, when $m\geq 6,$ we use again the convergence of the series   $\sum_{n}n^{-3}$ to find
\begin{align*}
 \left\|\sum_{n=3}^{m-3} \begin{pmatrix}m\\n\end{pmatrix}(\p_q^nh)\p_q^{m-n}h_p^2\right\|_\alpha&\leq C_5\sum_{n=3}^{m-3} \begin{pmatrix}m\\n\end{pmatrix}\|\p_q^nh\|_{1+\alpha}\|\p_q^{m-n}h_p^2\|_\alpha\\
 &\leq K_5\sum_{n=3}^{m-3} \begin{pmatrix}m\\n\end{pmatrix}L^{n-2}(n-3)!L^{m-n-2}(m-n-3)!\\
 & \leq K_5L^{m-4}(m-3)!.
 \end{align*}
Thus, we have found a constant $K_7>1$ such that
\begin{align}\label{F4}
\|T_2\|_\alpha\leq K_7 L^{m-3}(m-3)!.
\end{align}

Combining \eqref{SE}, \eqref{F1}, \eqref{F2}, \eqref{F3}, and \eqref{F4} we conclude that there exists a constant $K_8>1$ such  that $\|\p_q^mh\|_{1+\alpha}\leq K_8 L^{m-5/2}(m-3)!.$
Since $K_8$ is independent of $m$ and $L$, we may require, additionally to \eqref{E:CC},  that $L\geq K^2_8.$
But then $\|\p_q^mh\|_{1+\alpha}\leq   L^{m-2}(m-3)!,$  and the induction principle guarantees that\eqref{E} is satisfied for all $m\in\N$ with $m\geq3.$ 
This finishes the proof. 
\end{proof}

\end{document}